\font\smallit=cmti10
\renewcommand\section{\@startsection {section}{1}{\z@}
{-30pt \@plus -1ex \@minus -.2ex}
{2.3ex \@plus.2ex}
{\normalfont\normalsize\bfseries}}
\renewcommand\subsection{\@startsection{subsection}{2}{\z@}
{-3.25ex\@plus -1ex \@minus -.2ex}
{1.5ex \@plus .2ex}
{\normalfont\normalsize\bfseries}}
\renewcommand{\@seccntformat}[1]{\csname the#1\endcsname. }
\newtheorem{theorem}{Theorem}
\newtheorem{lemma}{Lemma}
\theoremstyle{definition}
\newtheorem{remark}{Remark}
\newtheorem{definition}{Definition}
\begin{document}

\begin{center}
\uppercase{\bf A generalization of an identity due to Kimura and Ruehr}
\vskip 20pt
{\bf J.-P. Allouche} \\
{\smallit 
CNRS, Institut de Math\'ematiques de Jussieu-PRG \\
Universit\'e Pierre et Marie Curie, Case 247 \\
4 Place Jussieu \\
F-75252 Paris Cedex 05 France} \\
{\tt jean-paul.allouche@imj-prg.fr}\\
\end{center}
\vskip 30pt


\centerline{\bf Abstract}
\noindent
An identity stated by Kimura and proved by Ruehr, Kimura and others stipulates that
for any function $f$ continuous on $[-\frac{1}{2}, \frac{3}{2}]$ one has
$$
\int_{-1/2}^{3/2} f(3x^2 - 2x^3) dx = 2 \int_0^1 f(3x^2 - 2x^3) dx.
$$
We prove that this equality is not an isolated example by providing a family of polynomials,
related to the Tchebychev polynomials and of which $(3x^2 - 2x^3)$ is a particular case,
giving rise to similar identities.


\vskip 30pt

\section{Introduction}

In this text, we address an identity that we call the Kimura-Ruehr identity:
this was a question posed by Kimura and answered by Ruehr, but also by the proposer 
as well as by nine other contributors; see \cite{Kimura-Ruehr}. It reads

\medskip

{\it Let $f$ be a real function that is continuous on $[-\frac{1}{2}, \frac{3}{2}]$. Then
\begin{equation}\label{ruehr1}
\int_{-1/2}^{3/2} f(3x^2 - 2x^3) dx = 2 \int_0^1 f(3x^2 - 2x^3) dx.
\end{equation}
} 

\medskip

In his proof \cite{Kimura-Ruehr}, Ruehr notes that the identity is equivalent to the identities 
obtained for $f(x) = x^n$ for all nonnegative integers. In particular, he points out the identities
\begin{equation}\label{ruehr2}
\sum_{0 \leq j \leq n} 3^j {3n-j \choose 2n} = 
\sum_{0 \leq j \leq 2n} (-3)^j {3n-j \choose n}
\end{equation}
and
\begin{equation}\label{ruehr3}
\sum_{0 \leq j \leq n} 2^j {3n+1 \choose n-j} =
\sum_{0 \leq j \leq 2n} (-4)^j {3n+1 \choose n+1+j}.
\end{equation}
Equality~(\ref{ruehr2}) is the corrected version of the corresponding one given in 
\cite{Kimura-Ruehr}, as indicated in \cite{MTWZ} (also see \cite{Alzer-Prodinger}).

\bigskip

A way to generalize these Identities~(\ref{ruehr2}) and (\ref{ruehr3}) is to 
introduce polynomials with binomial coefficients whose values at some point
coincide with the quantities above: this was done in \cite{Alzer-Prodinger},
and, with two extra parameters, in \cite{KA}.

\medskip

Now another question that quickly comes to mind when looking at Equality~(\ref{ruehr1})
is whether this equality is ``isolated'', or whether it is an instance in a general family
of identities. Here we give a countable family of equalities that generalize Equality~(\ref{ruehr1}): 
they are somehow based on trigonometry (actually on the use of Tchebychev polynomials), in relation 
to the spirit of Ruehr's original proof.

\section{Definitions}\label{def}

Recall that the Tchebychev polynomials of the first kind are the polynomials $T_n(X)$
defined by $T_0(X) = 1$, $T_1(X) = X$, and for all $n \geq 0$, $T_{n+2}(X) = 2XT_{n+1}(X) - T_n(X)$.
They have the property that, for all $\theta \in {\mathbb R}$, the relation $T_n(\cos \theta) = \cos n \theta$ holds.

\medskip

In the rest of the paper we will use the following quantities.

\begin{definition}

\ { }

\begin{itemize}

\item For each integer $n > 1$, $a_n$ and $b_n$ are defined by
$$
a_n := \cos^2\frac{\pi}{n} - \cos^2\frac{\pi}{2n} = 
\frac{1}{2}\left(\cos \frac{2\pi}{n}  - \cos \frac{\pi}{n}\right)
\ \ \ \mbox{\rm and} \ \ \
b_n := \cos^2\frac{\pi}{2n} = \frac{1}{2}\left(\cos \frac{\pi}{n} + 1 \right).
$$

\item Furthermore, let $f$ be a function that is continuous on $[0,1]$.  For
$n > 1$ we let $A_n(f)$ and $B_n(f)$ denote the two quantities
$$
A_n(f) := \frac{1}{a_n} \int_0^{\pi/2n} f(\cos^2 nu) \sin 2u \ \mbox{\rm d}u 
\ \ \ \mbox{\rm and} \ \ \
B_n(f) := \frac{1}{b_n} \int_0^{\pi/2n} f(\cos^2 nu) \cos 2u \ \mbox{\rm d}u.
$$
\end{itemize}

\end{definition}

\begin{remark}
Note that $a_n < 0$.
\end{remark}

\begin{definition}
We define the polynomials $V_n(X)$ and $W_n(X)$ by
$$
V_n(X^2) := T_n^2(X)
\ \ \ \mbox{\rm and} \ \ \
W_n(X) := V_n(a_n X + b_n).
$$
\end{definition}

\begin{remark} It is clear from the recurrence property of the Tchebychev polynomials
given above that $T_n(X)$ is even (resp., odd) if $n$ is even (resp., odd). Thus $T_n^2(X)$
is always an even polynomial, so that the polynomial $V_n$ is well defined.
\end{remark}

\section{Three lemmas}

\begin{lemma}\label{01}
Let $f$ be a function that is continuous on $[0,1]$. Then
$$
\int_0^1 f(W_n(x)) \ \mbox{\rm d}x = A_n(f) \cos\frac{2\pi}{n} - B_n(f) \sin\frac{2\pi}{n}\cdot
$$
\end{lemma}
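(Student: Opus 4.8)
The plan is to evaluate the left-hand integral by the change of variable that the definitions of $V_n$ and $W_n$ are tailor-made for, and then to fold the resulting trigonometric integral back onto the interval $[0,\pi/2n]$ that appears in $A_n(f)$ and $B_n(f)$. First I would set $a_n x + b_n = \cos^2\phi$, taking $\phi$ as the new variable. The key point is that this linearises $W_n$ completely: from $V_n(X^2)=T_n^2(X)$ we get $V_n(\cos^2\phi)=T_n^2(\cos\phi)=\cos^2 n\phi$, so that $W_n(x)=V_n(a_n x+b_n)=\cos^2 n\phi$ with no error term. The endpoints are exactly the reason $a_n$ and $b_n$ were defined as they were: at $x=0$ one has $\cos^2\phi=b_n=\cos^2\frac{\pi}{2n}$ and at $x=1$ one has $\cos^2\phi=a_n+b_n=\cos^2\frac{\pi}{n}$, so $\phi$ travels from $\frac{\pi}{2n}$ to $\frac{\pi}{n}$. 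I would first record that this whole range lies in $[0,\pi/2]$ for every integer $n>1$, where $\phi\mapsto\cos^2\phi$ is strictly monotone, so the substitution is single-valued and admissible. Differentiating $a_n x+b_n=\cos^2\phi$ gives $a_n\,dx=-\sin 2\phi\,d\phi$, hence
$$
\int_0^1 f(W_n(x))\,dx=-\frac{1}{a_n}\int_{\pi/2n}^{\pi/n} f(\cos^2 n\phi)\,\sin 2\phi\,d\phi.
$$

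Next I would fold the interval $[\frac{\pi}{2n},\frac{\pi}{n}]$ onto $[0,\frac{\pi}{2n}]$ by the reflection $\phi=\frac{\pi}{n}-u$. This particular reflection is forced by the wish to match the integrand of $A_n(f)$ and $B_n(f)$: it gives $\cos^2 n\phi=\cos^2(\pi-nu)=\cos^2 nu$ while turning $\sin 2\phi$ into $\sin(\frac{2\pi}{n}-2u)$. Expanding the last factor by the subtraction formula as $\sin\frac{2\pi}{n}\cos 2u-\cos\frac{2\pi}{n}\sin 2u$ breaks the integral into precisely the two building blocks $\int_0^{\pi/2n} f(\cos^2 nu)\cos 2u\,du$ and $\int_0^{\pi/2n} f(\cos^2 nu)\sin 2u\,du$. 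Rewriting these through the definitions of $A_n(f)$ and $B_n(f)$, and carrying along the prefactor $-1/a_n$ together with the orientation reversal introduced by the reflection, then reassembles the two terms into the linear combination $A_n(f)\cos\frac{2\pi}{n}-B_n(f)\sin\frac{2\pi}{n}$ asserted by the lemma.

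I expect the difficulty to be bookkeeping rather than conceptual. Two separate sources of sign must be tracked simultaneously: the constant $a_n$ is negative, and each of the two substitutions reverses the orientation of the interval, so it is easy to lose a minus sign. Just as importantly, one must check that the normalising constants $a_n$ and $b_n$ baked into $A_n(f)$ and $B_n(f)$ are exactly the factors that emerge from $dx$ and from the coefficients $\cos\frac{2\pi}{n}$ and $\sin\frac{2\pi}{n}$, so that the combination comes out with the stated normalisation and not merely up to a constant factor. To settle all of this cleanly before writing out the general identification, I would first run the test cases $f\equiv 1$ and $n=3$, where $W_3(x)=3x^2-2x^3$ recovers the Kimura--Ruehr polynomial and both sides can be evaluated in closed form, and use them to pin down every sign and every normalising factor.
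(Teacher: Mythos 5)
Your outline reproduces the paper's proof step for step: the same substitution $a_nx+b_n=\cos^2 t$ sending $[0,1]$ to $[\frac{\pi}{2n},\frac{\pi}{n}]$ with $a_n\,\mbox{d}x=-\sin 2t\,\mbox{d}t$, the same reflection $t=\frac{\pi}{n}-u$, and the same expansion of $\sin(\frac{2\pi}{n}-2u)$; every step you describe is admissible exactly as you say.

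There is, however, one concrete point where your final identification does not survive the bookkeeping you yourself flag as the crux, and your proposed sanity check is exactly what exposes it. Carrying out the identification with the definitions as printed, the cosine block is $\int_0^{\pi/2n} f(\cos^2 nu)\cos 2u\,\mbox{d}u = b_n B_n(f)$, not $a_n B_n(f)$, so after dividing by $a_n$ you obtain
$$
\int_0^1 f(W_n(x))\,\mbox{\rm d}x = A_n(f)\cos\frac{2\pi}{n} - \frac{b_n}{a_n}\,B_n(f)\sin\frac{2\pi}{n},
$$
which is not the stated right-hand side, since $a_n\neq b_n$. Your test case makes this numerical: for $f\equiv 1$, $n=3$ one has $a_3=-\frac12$, $b_3=\frac34$, hence $A_3(1)=-\frac12$ and $B_3(1)=\frac{1}{\sqrt{3}}$, giving $A_3\cos\frac{2\pi}{3}-B_3\sin\frac{2\pi}{3}=\frac14-\frac12=-\frac14$, whereas the left side is $1$. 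The resolution is that the normalisation $\frac{1}{b_n}$ in the printed definition of $B_n(f)$ should be $\frac{1}{a_n}$: with $B_n(f):=\frac{1}{a_n}\int_0^{\pi/2n}f(\cos^2 nu)\cos 2u\,\mbox{d}u$ the same test yields $\frac14+\frac34=1$ and the lemma holds verbatim. This is also the normalisation the paper's proof of Lemma~\ref{uv} silently uses when, after expanding the sine, it writes $a_nB_n$ for the cosine integral. Nothing downstream changes, since $A_n$ and $B_n$ are eliminated in the proof of Theorem~\ref{main}; but your writeup must either adopt the corrected normalisation or carry the factor $\frac{b_n}{a_n}$ explicitly, because as stated the claim that the two building blocks ``reassemble'' into exactly $A_n(f)\cos\frac{2\pi}{n}-B_n(f)\sin\frac{2\pi}{n}$ is false under the printed definition.
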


\begin{proof}
We make the change of variables $a_n x + b_n = \cos^2 t$, so that $t \in [\frac{\pi}{2n}, \frac{\pi}{n}]$
and $a_n$ d$x = -2 \sin t \cos t$ d$t = -\sin 2t$ d$t$. Thus
$$
\begin{array}{lll}
\displaystyle a_n\int_0^1 f(W_n(x)) \ \mbox{\rm d}x 
&= \displaystyle a_n\int_0^1 f(V_n(a_n x + b_n)) \ \mbox{\rm d}x 
&= \displaystyle -\int_{\frac{\pi}{2n}}^{\frac{\pi}{n}} f(V_n(\cos^2 t)) \sin 2t \ \mbox{\rm d}t  \\[2em]
&= \displaystyle -\int_{\frac{\pi}{2n}}^{\frac{\pi}{n}} f(T_n^2(\cos t)) \sin 2t \ \mbox{\rm d}t
&= \displaystyle -\int_{\frac{\pi}{2n}}^{\frac{\pi}{n}} f(\cos^2 nt) \sin 2t \ \mbox{\rm d}t 
\end{array}
$$
Putting $t = \frac{\pi}{n} - u $ in the last integral yields
$$
a_n\int_0^1 f(W_n(x)) \ \mbox{\rm d}x 
= - \int_0^{\frac{\pi}{2n}} f(\cos^2 nu) \sin \left(\frac{2\pi}{n} - 2u\right) \ \mbox{\rm d}u 
$$
which gives the result by expanding $\sin(\frac{2\pi}{n} - 2u)$.  
\end{proof}

\begin{lemma}\label{u0}
Let $f$ be a function that is continuous on $[0,1]$. Then
$$
\int_{\frac{1-b_n}{a_n}}^0 f(W_n(x)) \ \mbox{\rm d}x = - A_n(f).
$$
\end{lemma}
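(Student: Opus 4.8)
The plan is to mimic the proof of Lemma~\ref{01} almost verbatim, using the very same substitution $a_n x + b_n = \cos^2 t$; the only differences are that the new limits of integration land in a different range of $t$, and that this time no subsequent reflection of the variable is needed. First I would locate the endpoints. Setting $a_n x + b_n = \cos^2 t$, the lower limit $x = \frac{1-b_n}{a_n}$ satisfies $a_n x + b_n = 1 = \cos^2 0$, hence corresponds to $t = 0$, while the upper limit $x = 0$ gives $a_n x + b_n = b_n = \cos^2\frac{\pi}{2n}$, hence corresponds to $t = \frac{\pi}{2n}$. Thus, as $x$ runs over $[\frac{1-b_n}{a_n}, 0]$, the new variable $t$ runs over $[0, \frac{\pi}{2n}]$.

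Next I would carry out the substitution itself. Since $a_n\,\mathrm{d}x = -\sin 2t\,\mathrm{d}t$ and $V_n(\cos^2 t) = T_n^2(\cos t) = \cos^2 nt$, I expect to obtain
$$
a_n \int_{\frac{1-b_n}{a_n}}^0 f(W_n(x))\,\mathrm{d}x
= -\int_0^{\pi/2n} f(\cos^2 nt)\,\sin 2t\,\mathrm{d}t.
$$
The integral on the right-hand side is precisely $a_n A_n(f)$ by the definition of $A_n(f)$, so dividing through by $a_n \neq 0$ yields the claimed equality $\int_{\frac{1-b_n}{a_n}}^0 f(W_n(x))\,\mathrm{d}x = -A_n(f)$.

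The step that requires the most care is the bookkeeping of the endpoints and the signs. One must check that both substitution endpoints lie in the branch $t \in [0, \frac{\pi}{2}]$ on which $\cos^2$ is injective, so that $t = 0$ and $t = \frac{\pi}{2n}$ are genuinely the correct preimages of $1$ and $b_n$; and one must then track the interaction of the minus sign arising from $\mathrm{d}(\cos^2 t) = -\sin 2t\,\mathrm{d}t$ with the orientation of the integral. The pleasant feature here is that, because the resulting integrand is already $f(\cos^2 nt)\,\sin 2t$ — exactly the one appearing in $A_n(f)$ — no reflection $t \mapsto \frac{\pi}{n} - u$ is needed, in contrast to the proof of Lemma~\ref{01}; this makes the present computation strictly shorter, and I expect no real obstacle beyond the sign-checking.
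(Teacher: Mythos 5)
Your proposal is correct and matches the paper's proof exactly: the paper likewise applies the substitution $a_n x + b_n = \cos^2 t$ from Lemma~\ref{01}, obtains $a_n \int_{\frac{1-b_n}{a_n}}^0 f(W_n(x)) \, \mbox{\rm d}x = - \int_0^{\pi/2n} f(\cos^2 nt) \sin 2t \ \mbox{\rm d}t$, and concludes from the definition of $A_n(f)$. Your extra endpoint and sign bookkeeping is sound and merely makes explicit what the paper leaves implicit.
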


\begin{proof}
We make the same change of variables as in Lemma~\ref{01} above, obtaining
$$
a_n \int_{\frac{1-b_n}{a_n}}^0 f(W_n(x)) \ \mbox{\rm d}x 
= - \int_0^{\frac{\pi}{2n}} f(V_n(\cos^2 t)) \sin 2 t \ \mbox{\rm d}t
= - \int_0^{\frac{\pi}{2n}} f(\cos^2 nt) \sin 2 t \ \mbox{\rm d}t.
$$
\end{proof}

\begin{lemma}\label{uv}
Let $f$ be a function that is continuous on $[0,1]$. Then
$$
\int_{\frac{1-b_n}{a_n}}^{-\frac{b_n}{a_n}} f(W_n(x)) \ \mbox{\rm d}x = 
\begin{cases}
- A_n(f) - B_n(f) \frac{\cos \frac{\pi}{2n}}{\sin \frac{\pi}{2n}}, \  & \mbox{\rm if $n$ is odd;} \\[1em]
- 2 A_n(f) - 2 B_n(f) \frac{\cos \frac{\pi}{n}}{\sin \frac{\pi}{n}}, \ & \mbox{\rm if $n$ is even.}
\end{cases}
$$
\end{lemma}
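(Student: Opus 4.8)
The plan is to follow the template of Lemmas~\ref{01} and~\ref{u0}: make the substitution $a_n x + b_n = \cos^2 t$. Since $a_n<0$, the endpoints $x=\frac{1-b_n}{a_n}$ and $x=-\frac{b_n}{a_n}$ correspond to $\cos^2 t = 1$ and $\cos^2 t = 0$, i.e.\ to $t=0$ and $t=\frac{\pi}{2}$, and $a_n\,\mathrm dx = -\sin 2t\,\mathrm dt$. Using $V_n(\cos^2 t)=T_n^2(\cos t)=\cos^2 nt$ exactly as before turns the claim into the evaluation of
$$
\int_{\frac{1-b_n}{a_n}}^{-\frac{b_n}{a_n}} f(W_n(x))\,\mathrm dx = -\frac{1}{a_n}\int_0^{\pi/2} f(\cos^2 nt)\,\sin 2t\,\mathrm dt .
$$
Note that the $t$-interval $[0,\frac{\pi}{2}]$ is the union of the block $[0,\frac{\pi}{2n}]$ of Lemma~\ref{u0} and the block $[\frac{\pi}{2n},\frac{\pi}{n}]$ of Lemma~\ref{01}, together with the remaining blocks up to $\frac{\pi}{2}$; one cannot simply add the two earlier lemmas, because the weight $\sin 2t$ carries a different phase on each block, so everything reduces to computing this single integral honestly.

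The heart of the matter is that $f(\cos^2 nt)$ has period $\frac{\pi}{n}$ and that $\cos^2 nt$ is symmetric about each point $\frac{k\pi}{2n}$, whereas $\sin 2t$ is \emph{not} periodic. I would therefore partition $[0,\frac{\pi}{2}]$ into the $n$ blocks $[\frac{k\pi}{2n},\frac{(k+1)\pi}{2n}]$, $0\le k\le n-1$, and fold each one back onto $[0,\frac{\pi}{2n}]$: on an even block $k=2m$ set $t=\frac{m\pi}{n}+u$, and on an odd block $k=2m+1$ set $t=\frac{(m+1)\pi}{n}-u$, so that in every case $\cos^2 nt=\cos^2 nu$ with $u\in[0,\frac{\pi}{2n}]$. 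Expanding the resulting $\sin 2t = \sin(\frac{2m\pi}{n}\pm 2u)$ by the addition formula writes each block's contribution as a combination of the two basic integrals $\int_0^{\pi/2n} f(\cos^2 nu)\sin 2u\,\mathrm du$ and $\int_0^{\pi/2n} f(\cos^2 nu)\cos 2u\,\mathrm du$ — that is, of $A_n(f)$ and $B_n(f)$ up to their normalizing constants — with trigonometric coefficients $\cos\frac{2m\pi}{n}$ and $\sin\frac{2m\pi}{n}$.

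Summing over the blocks then reduces the problem to two elementary trigonometric sums. The coefficients multiplying the $\sin 2u$-integral (the $A_n$ part) are governed by the cosine sums $\sum\cos\frac{2m\pi}{n}$, which evaluate to $\pm 1$ and combine to the overall factor $1$ when $n$ is odd and $2$ when $n$ is even. The coefficients multiplying the $\cos 2u$-integral (the $B_n$ part) come from a Dirichlet-type sine sum, for which I would use the closed form $\sum_{j=1}^{(n-1)/2}\sin\frac{2j\pi}{n}=\tfrac12\cot\frac{\pi}{2n}$ in the odd case (appearing with a factor $2$, hence $\cot\frac{\pi}{2n}$), and the evaluation $\sum_{j=0}^{n/2-1}\sin\frac{2j\pi}{n}=\cot\frac{\pi}{n}$ in the even case (contributed once by the even blocks and once by the odd blocks, hence $2\cot\frac{\pi}{n}$). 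Multiplying by $-\frac1{a_n}$ and recognizing $A_n(f),B_n(f)$ gives precisely $-A_n(f)-B_n(f)\cot\frac{\pi}{2n}$ for odd $n$ and $-2A_n(f)-2B_n(f)\cot\frac{\pi}{n}$ for even $n$.

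The main obstacle I anticipate is exactly this bookkeeping: tracking the orientation-reversing substitution on the odd blocks (which flips the sign of the $\cos\frac{2(m+1)\pi}{n}$ coefficient relative to the even blocks), ensuring the $A_n$ cross-terms collapse to the correct small integer, and evaluating the two sine sums correctly in each parity. The even/odd dichotomy in the statement is entirely an artifact of how many $\frac{\pi}{2n}$-blocks fit into $[0,\frac{\pi}{2}]$ and of the parity of that count, so both cases are proved by the same argument with the two different closed forms inserted at the last step.
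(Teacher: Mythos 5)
Your proposal is correct and follows essentially the same route as the paper's own proof: the substitution $a_nx+b_n=\cos^2 t$, the partition of $[0,\frac{\pi}{2}]$ into $n$ blocks of length $\frac{\pi}{2n}$, the parity-dependent folding substitutions $t=\frac{k\pi}{2n}+u$ and $t=\frac{(k+1)\pi}{2n}-u$, and the final reduction to the even-index sine sum with the same closed forms $\frac{1}{2}\cot\frac{\pi}{2n}$ (odd $n$) and $\cot\frac{\pi}{n}$ (even $n$). The only cosmetic difference is that the paper evaluates that sum as the imaginary part of a geometric series, whereas you quote the closed form directly.
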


\begin{proof}
Making once more the change of variable used in Lemma~\ref{01} above, we obtain
$$
a_n \int_{\frac{1-b_n}{a_n}}^{-\frac{b_n}{a_n}} f(W_n(x)) \ \mbox{\rm d}x = 
- \int_0^{\frac{\pi}{2}} f(\cos^2 nt) \sin 2t \ \mbox{\rm d}t
= - \sum_{k=0}^{n-1} I_{k,n}
$$
where
$$
I_{k,n} = \int_{\frac{k\pi}{2n}}^{\frac{(k+1)\pi}{2n}} f(\cos^2 nt) \sin 2t \ \mbox{\rm d}t.
$$
Now we will give another expression for $I_{k,n}$ according to the parity of $k$.

\begin{itemize}

\item If $k$ is odd, we make in $I_{k,n}$ the change of variable $t = \frac{(k+1)\pi}{2n} - u$.
This yields
$$
I_{k,n} = \int_0^{\frac{\pi}{2n}} 
            f(\cos^2(\tfrac{(k+1)\pi}{2} - nu)) \sin (\tfrac{(k+1)\pi}{n}-2u) \ \mbox{\rm d}u.
$$
But $k$ is odd, hence $\frac{(k+1)}{2}$ is an integer. Thus, expanding the sine, we obtain
$$
I_{k,n} = -a_n A_n \cos \left(\frac{(k+1)\pi}{n}\right) + a_n B_n \sin \left(\frac{(k+1)\pi}{n}\right).
$$
\item If $k$ is even, we make in $I_{k,n}$ the change of variable $t = \frac{k\pi}{2n}+u$.
This yields
$$
I_{k,n} = \int_0^{\frac{\pi}{2n}} 
            f(\cos^2(\tfrac{k\pi}{2} + nu)) \sin (\tfrac{k\pi}{n} + 2u) \ \mbox{\rm d}u.
$$
But $k$ is even, hence $\frac{k}{2}$ is an integer. Thus, expanding the sine, we obtain
$$
I_{k,n} = a_n A_n \cos \left(\frac{k\pi}{n}\right) + a_n B_n \sin \left(\frac{k\pi}{n}\right).
$$

\end{itemize}

We thus have 
$$
a_n \int_{\frac{1-b_n}{a_n}}^{-\frac{b_n}{a_n}} f(W_n(x)) \ \mbox{\rm d}x 
= - \sum_{k=0}^{n-1} I_{k,n} = \Sigma_1(n) + \Sigma_2(n)
$$
where
$$
\Sigma_1(n) = 
a_n A_n \sum_{\substack{0 \leq k \leq n-1 \\ k \ {\rm odd}}} \cos \left(\frac{(k+1)\pi}{n}\right) 
- a_n B_n \sum_{\substack{0 \leq k \leq n-1 \\ k \ {\rm odd}}} \sin \left(\frac{(k+1)\pi}{n}\right)
$$
and
$$
\Sigma_2(n) = 
- a_n A_n \sum_{\substack{0 \leq k \leq n-1 \\ k \ {\rm even}}} \cos \left(\frac{k\pi}{n}\right) 
- a_n B_n \sum_{\substack{0 \leq k \leq n-1 \\ k \ {\rm even}}} \sin \left(\frac{k\pi}{n}\right).
$$
Rearranging $\Sigma_1(n) + \Sigma_2(n)$ and simplifying by $a_n$ finally gives
$$
\int_{\frac{1-b_n}{a_n}}^{-\frac{b_n}{a_n}} f(W_n(x)) \ \mbox{\rm d}x =
\begin{cases}
- A_n 
- 2 B_n \displaystyle\sum_{\substack{1 \leq k \leq n-1 \\ k \ {\rm even}}} \sin \left(\frac{k\pi}{n}\right),
&\mbox{\rm if $n$ is odd;} \\
- 2 A_n 
- 2 B_n \displaystyle\sum_{\substack{1 \leq k \leq n-1 \\ k \ {\rm even}}} \sin \left(\frac{k\pi}{n}\right),
&\mbox{\rm if $n$ is even.}
\end{cases}
$$
To finish the proof of the lemma it suffices to make the classical computation
$$
\sum_{\substack{1 \leq k \leq n-1 \\ k \ {\rm even}}} \sin \left(\frac{k\pi}{n}\right)
= \Im \left(\sum_{\substack{1 \leq k \leq n-1 \\ k \ {\rm even}}} e^{\frac{ik\pi}{n}}\right)
=
\begin{cases}
\displaystyle\frac{\cos \frac{\pi}{2n}}{2 \sin \frac{\pi}{2n}} &\mbox{\rm if $n$ is odd,} \\[1.5em]
\displaystyle\frac{\cos \frac{\pi}{n}}{\sin \frac{\pi}{n}} &\mbox{\rm if $n$ is even.} \ \ \ 
\end{cases}
$$
\end{proof}

\section{The main result}

We are now ready to prove our main result. (Recall the definitions of $a_n$, $b_n$, $A_n(f)$, $B_n(f)$
and $W_n(X)$ given in Section~\ref{def} above.)

\begin{theorem}\label{main}
Let $n$ be an integer $> 2$, and $f$ a function that is continuous on $[0,1]$. Then
\begin{itemize}

\item if $n$ is odd, then
$$
\int_0^1 f(W_n(x)) \ \mbox{\rm d} x =
- \cos\frac{2\pi}{n} \int_{\frac{1-b_n}{a_n}}^{\frac{-b_n}{a_n}} f(W_n(x)) \ \mbox{\rm d}x
+\left(2\cos\frac{\pi}{n}-1\right) \int_0^{\frac{-b_n}{a_n}} f(W_n(x)) \ \mbox{\rm d} x
$$

\item if $n$ is even, then
$$
\int_{\frac{1-b_n}{a_n}}^1 f(W_n(x)) \ \mbox{\rm d} x =
\sin^2\frac{\pi}{n} \int_{\frac{1-b_n}{a_n}}^{\frac{-b_n}{a_n}} f(W_n(x)) \ \mbox{\rm d} x
$$

\end{itemize}

\end{theorem}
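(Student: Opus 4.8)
The plan is to reduce every integral occurring in the statement to a linear combination of the two quantities $A_n(f)$ and $B_n(f)$, after which both assertions collapse to elementary trigonometric identities in $n$ that no longer involve $f$. The three lemmas already supply three of the relevant integrals; the remaining ones I would obtain from them by additivity of the integral over abutting intervals. Throughout I abbreviate $A_n = A_n(f)$ and $B_n = B_n(f)$.

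First I would record the relative position of the four endpoints. Under the substitution $a_n x + b_n = \cos^2 t$ that underlies all three lemmas, and using $a_n + b_n = \cos^2\frac{\pi}{n}$, the values $x = \frac{1-b_n}{a_n}, 0, 1, -\frac{b_n}{a_n}$ correspond respectively to $t = 0, \frac{\pi}{2n}, \frac{\pi}{n}, \frac{\pi}{2}$; since $a_n < 0$ the correspondence $t \mapsto x$ is increasing, so $\frac{1-b_n}{a_n} < 0 < 1 < -\frac{b_n}{a_n}$. Additivity then gives
$$\int_0^{-b_n/a_n} f(W_n(x))\,\mathrm{d}x = \int_{(1-b_n)/a_n}^{-b_n/a_n} f(W_n(x))\,\mathrm{d}x - \int_{(1-b_n)/a_n}^{0} f(W_n(x))\,\mathrm{d}x$$
and
$$\int_{(1-b_n)/a_n}^{1} f(W_n(x))\,\mathrm{d}x = \int_{(1-b_n)/a_n}^{0} f(W_n(x))\,\mathrm{d}x + \int_0^1 f(W_n(x))\,\mathrm{d}x.$$
Substituting Lemmas~\ref{01}, \ref{u0} and \ref{uv} into the right-hand sides expresses each of these, as well as $\int_0^1$ itself, in terms of $A_n$ and $B_n$. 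In particular Lemma~\ref{u0} makes the combination $\int_0^{-b_n/a_n}$ especially clean in the odd case: it cancels the $A_n$ contribution of Lemma~\ref{uv}, leaving only a multiple of $B_n$.

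With these substitutions in hand the two cases become a matter of matching the coefficients of $A_n$ and $B_n$ on the two sides. For $n$ odd the coefficients of $A_n$ agree immediately (both sides produce $\cos\frac{2\pi}{n}\,A_n$), and the coefficient identity for $B_n$ reduces, after writing $\cos\frac{2\pi}{n} = 2\cos^2\frac{\pi}{n}-1$, to
$$2\cos\tfrac{\pi}{n}\bigl(\cos\tfrac{\pi}{n}-1\bigr)\cot\tfrac{\pi}{2n} = -\sin\tfrac{2\pi}{n},$$
which follows at once from the half-angle relations $\cos\frac{\pi}{n}-1 = -2\sin^2\frac{\pi}{2n}$ and $\sin\frac{\pi}{n} = 2\sin\frac{\pi}{2n}\cos\frac{\pi}{2n}$ (here $\cos\frac{\pi}{n}\neq 0$ since $n>2$). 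For $n$ even the reduction is even shorter: both sides equal $-2\sin^2\frac{\pi}{n}\,A_n - 2\sin\frac{\pi}{n}\cos\frac{\pi}{n}\,B_n$ once one uses $\cos\frac{2\pi}{n}-1 = -2\sin^2\frac{\pi}{n}$ and $\sin\frac{2\pi}{n} = 2\sin\frac{\pi}{n}\cos\frac{\pi}{n}$ together with the even-case value of Lemma~\ref{uv}.

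The only genuine obstacle is bookkeeping rather than ideas: because $a_n < 0$ the orientation of the substitution is reversed, and one must track signs carefully when splitting intervals and when reading off the $A_n$- and $B_n$-coefficients from the three lemmas. Once those coefficients are correctly isolated, the remaining trigonometric identities are routine double- and half-angle manipulations, and the differing shapes of the two statements are seen to originate entirely from the parity split already present in Lemma~\ref{uv}.
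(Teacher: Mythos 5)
Your proof is correct and takes essentially the same route as the paper: both reduce every integral to a linear combination of $A_n(f)$ and $B_n(f)$ via Lemmas~\ref{01}, \ref{u0} and \ref{uv} together with the splitting $\int_{\frac{1-b_n}{a_n}}^{0} = \int_{\frac{1-b_n}{a_n}}^{\frac{-b_n}{a_n}} - \int_{0}^{\frac{-b_n}{a_n}}$, and both close with the same half- and double-angle identities. The only difference is direction of argument --- the paper eliminates $B_n(f)$ and then $A_n(f)$ to derive the identities, whereas you verify them by matching coefficients in the $(A_n, B_n)$ basis --- which is the same linear-algebraic content, and your coefficient computations (including the key reduction $2\cos\frac{\pi}{n}(\cos\frac{\pi}{n}-1)\cot\frac{\pi}{2n} = -\sin\frac{2\pi}{n}$ in the odd case) all check out.
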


\begin{proof}
First we eliminate $B_n(f)$ between Lemma~\ref{01} and Lemma~\ref{uv}. Then we use Lemma~\ref{u0} to get
rid of $A_n(f)$. Finally, for $n$ even, we multiply by $\sin\frac{\pi}{n}$, divide by $2 \cos\frac{\pi}{n}$, and 
we combine two integrals, obtaining the statement above; for $n$ odd, we multiply by $\sin\frac{\pi}{2n}$ and we 
divide by $\cos\frac{\pi}{2n}$ obtaining the equality
$$
\int_0^1 f(W_n(x)) \ \mbox{\rm d} x =
4 \cos\frac{\pi}{n} \sin^2 \frac{\pi}{2n} \int_{\frac{1-b_n}{a_n}}^{\frac{-b_n}{a_n}} f(W_n(x)) \ \mbox{\rm d}x
+\left(1-2\cos\frac{\pi}{n}\right) \int_{\frac{1-b_n}{a_n}}^0 f(W_n(x)) \ \mbox{\rm d} x.
$$
The statement of the theorem follows by writing $\int_{\frac{1-b_n}{a_n}}^0 = \int_{\frac{1-b_n}{a_n}}^{\frac{-b_n}{a_n}}
- \int_0^{\frac{-b_n}{a_n}}$ and rearranging.  
\end{proof}

\begin{remark}

\ { }

\begin{itemize}

\item Theorem~\ref{main} above is still true, but trivial, for $n=1$ and $n=2$.

\item For $n = 3$, Theorem~\ref{main} gives
$$
\int_0^1 f(3x^2-2x^3) \ \mbox{\rm d} x = 
\frac{1}{2} \int_{-\frac{1}{2}}^{\frac{3}{2}} f(3x^2-2x^3) \ \mbox{\rm d} x
$$
which is exactly Ruehr's identity.

\item If $n=4$, then $a_4 = - \frac{\sqrt{2}}{4}$, and $b_4 = \frac{2+\sqrt{2}}{4}$. Thus, Theorem~\ref{main} gives
$$
\int_{1-\sqrt{2}}^1 f((x^2-2x)^2) \ \mbox{\rm d} x =
\frac{1}{2} \int_{1-\sqrt{2}}^{1+\sqrt{2}} f((x^2-2x)^2) \ \mbox{\rm d} x.
$$


\end{itemize}

\end{remark}

\end{document}